\documentclass[11pt, reqno]{amsart}
\usepackage[
    textwidth=14.8cm, 
    textheight=22.5cm, 
    heightrounded, 
    centering
]{geometry}

\usepackage{amsfonts,amssymb, amscd}
\usepackage{amsmath}
\usepackage{lmodern}
\usepackage{mathrsfs}
\usepackage{amsthm}
\usepackage{qsymbols}
\usepackage{mathtools}
\mathtoolsset{showonlyrefs}
\usepackage{dsfont}
\usepackage{graphicx}
\usepackage{latexsym}
\usepackage{chngcntr}
\usepackage[noadjust]{cite}
\usepackage{paralist}
\usepackage[parfill]{parskip}
\usepackage{bm}
\usepackage{tikz-cd}
\usepackage{nicefrac}
\usepackage{float}
\usepackage{csquotes}

\usepackage[shortlabels]{enumitem}
\setlist[enumerate,1]{label = \normalfont(\roman*), ref = (\roman*)}
\usepackage{euflag}

\usepackage{todonotes}
\makeatletter
\@mparswitchfalse%
\makeatother
\normalmarginpar %

\newtheorem{theorem}{Theorem}[section]
\newtheorem{lemma}[theorem]{Lemma}

\newtheorem{proposition}[theorem]{Proposition}

\newtheorem{taggedtheoremx}{Assumption}
\newenvironment{assumption}[1]
{\renewcommand\thetaggedtheoremx{#1}\taggedtheoremx}
{\endtaggedtheoremx}
\theoremstyle{definition}
\newtheorem{definition}[theorem]{Definition}
\newtheorem{remark}[theorem]{Remark}
\newtheorem{example}[theorem]{Example}

\usepackage[plainpages=false,pdfpagelabels,
citecolor=red]{hyperref}
\usepackage{xcolor}
\hypersetup{
	colorlinks,
	linkcolor={cyan!90!black},
	citecolor={magenta},
	urlcolor={green!40!black}
} %

\usepackage{doi}

\newcommand{\F}{\mathcal{F}}
\renewcommand{\P}{\mathbb{P}}
\newcommand{\E}{\mathbb{E}}
\newcommand{\R}{\mathbb{R}}

\newcommand{\N}{\mathbb{N}}

\newcommand{\Cont}{\mathrm{C}}

\newcommand{\e}{\mathrm{e}}
\renewcommand{\d}{\,\mathrm{d}}
\newcommand{\Dom}{D}

\newcommand{\eps}{\varepsilon}

\newcommand{\Om}{\Omega}
\newcommand{\ceqq}{\coloneqq}
\newcommand{\col}{\colon}

\newcommand{\I}{\textrm{I}}
\newcommand{\II}{\textrm{II}}
\newcommand{\III}{\textrm{III}}
\newcommand{\IV}{\textrm{IV}}
\newcommand{\VV}{\textrm{V}}

\makeatletter
\g@addto@macro\bfseries{\boldmath}
\makeatother

\makeatletter
\@namedef{subjclassname@2020}{%
	\textup{2020} Mathematics Subject Classification}
\makeatother

\title[Strong existence and uniqueness for quasilinear SPDEs]{Strong existence and uniqueness for a class of quasilinear stochastic evolution equations} 
\author{Sebastian Bechtel}
\address{Université Paris-Saclay, CNRS \\ Laboratoire de Mathématiques d’Orsay \\ 91405 Orsay \\ France}
\email{sebastian.bechtel@universite-paris-saclay.fr}
\author{Esm\'ee Theewis}
\address{Delft Institute of Applied Mathematics, Delft University of Technology, 2600 GA Delft, The Netherlands}
\email{e.s.theewis@tudelft.nl}
\subjclass[2020]{Primary: 60H15. Secondary: 35K59, 35K90, 35A02, 47J35.}
\date{July 14, 2026}
\thanks{This project has received funding from the European Union’s Horizon 2020 research and innovation programme under the Marie Skłodowska-Curie grant agreement No 101034255 \euflag{}.  The second author is supported by the VICI subsidy VI.C.212.027 of the Netherlands Organisation for Scientific Research (NWO)}
\dedicatory{}
\keywords{}
\begin{document}
\begin{abstract}
We establish existence of probabilistically strong solutions and pathwise uniqueness for a class of quasilinear stochastic evolution equations on bounded domains.
Our results combine recent weak existence results for quasilinear stochastic evolution equations in an $L^p$-setting (with $p > 2$) with Yamada--Watanabe theory.
To establish pathwise uniqueness, we rely on an $L^1$-contraction argument. 
\end{abstract}

\maketitle

\section{Introduction}

In this article, we investigate strong existence and pathwise uniqueness of global solutions to the quasilinear stochastic evolution equation on a bounded, open set $D\subset\R^d$: 
	\begin{align}
		\label{eq:ql}
		\tag{QL}
		\left\{\quad
		\begin{aligned}
			\d u  &= \big[ \partial_i (a_{ij}(u) \partial_j u) + \partial_i \Phi_i(u) +\phi(u) \big] \d t
			\\  & \qquad \qquad
			+ \sum_{n\geq 1}  \big[ b_{n,j}(u) \partial_j u + g_{n}(u) \big] \d w_n,\ \ &\text{ on }\Dom,\\
			u &= 0, \ \ &\text{ on }\partial\Dom,
			\\
			u(0)&=u_{0},\ \ &\text{ on }\Dom,
		\end{aligned}\right.
	\end{align}
where $u$ is scalar-valued and we use Einstein's summation convention. 
A common strategy to establish such results involves three steps: proving the existence of a probabilistically weak solution by means of a stochastic compactness method, establishing pathwise uniqueness  via an $L^1$-contraction argument, and finally applying Yamada–Watanabe-type results to deduce strong existence. While this strategy has been successfully applied to quasilinear equations on the torus in~\cite{DHV,HZ} and  conservative SPDEs on the full space in~\cite{FG}, equations on domains with boundary conditions were not treated yet.

As a first step towards equations on domains, existence of probabilistically weak solutions to \eqref{eq:ql} was established for finite time intervals $[0,T]$ in the article~\cite{BV} by the first-named author and Veraar -- even in the case of systems. The authors raise the question whether strong existence and pathwise uniqueness results could also be proved in their setting (see \cite[Rem.\ 6.8]{BV}). Using the approach outlined above, we answer their question in the affirmative in this work.

From now on, we fix a filtered probability space $(\Omega,\F,\P,(\F_t))$, an $\ell^2$-cylindrical Brownian motion $W=(w_n)$ on $(\Omega,\F,\P,(\F_t))$ and a $\F_0$-measurable initial datum  $u_0\colon \Omega\to L^2(D)$. 

The coefficients $a$ and $b$ and the nonlinearities $(\Phi_i)$, $\phi$ and $g$ in~\eqref{eq:ql} will be assumed to satisfy certain coercivity, dissipativity, growth and (local) Lipschitz assumptions, which will be specified in Assumption~\ref{ass:ql} for some $h > 1$ describing the order of growth.
Moreover, the initial datum $u_0$ is assumed to take values in $B_{2,p,0}^{1-\nicefrac{2}{p}}(D) \cap L^q(D)$ and satisfies appropriate moment conditions. %
The subscript zero for the Besov space indicates compatibility of the initial datum with the Dirichlet boundary conditions in~\eqref{eq:ql}. 
The parameters $p$ and $q$ are as follows: the ellipticity constants in Assumption~\ref{ass:ql} determine some $p_0\in(2,\infty)$. Then, exponents $p\in (2, p_0]$ and $q\in (ph, \infty)$ can be chosen. The condition $p > 2$ originates from the weak existence result in~\cite{BV}. It is leveraged to obtain powerful tightness results proving useful in the stochastic compactness method. 

Regarding the application of Yamada--Watanabe results, we will rely on the recent article \cite{T} by the second-named author, which fully covers our non-variational setting with time-dependent coefficients, and also allows for the inclusion of moment conditions that will be used. For an overview and comparison of the literature on Yamada--Watanabe theory, we refer to the introduction of \cite{T}. 

The following theorem is our main result. 

  \begin{theorem}[Strong existence and uniqueness]
    \label{th:strong_existence_uniqueness} 
    Let $h>1$ and suppose Assumption~\ref{ass:ql}. Then, there exists $p_0\in(2,\infty)$ such that, for all $p\in(2,p_0]$, for all $q\in(ph,\infty)$: %
    if  $u_0\in L^p(\Om;B_{2,p,0}^{1-\nicefrac{2}{p}}(D))\cap L^q(\Om\times D)$, %
     then there exists a unique global solution $u$ to \eqref{eq:ql} in the sense of Definition \ref{def: strong sol}. Furthermore, $u$ has the following additional regularity for every $T\in (0,\infty)$:%
        \[
    u\in L^p(\Om;C([0,T];B_{2,p,0}^{1-\nicefrac{2}{p}}(D)))\cap L^q(\Om\times (0,T)\times D)\cap L^p(\Om\times(0,T);H_0^{1}(D)). %
        \]
    \end{theorem}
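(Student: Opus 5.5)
The plan follows the three-step strategy from the introduction: weak existence, pathwise uniqueness, then Yamada--Watanabe. \textbf{Weak existence.} On each finite interval $[0,T]$ we invoke the weak existence result of \cite{BV}. Assumption~\ref{ass:ql} fixes the ellipticity constants, which give $p_0\in(2,\infty)$. For $p\in(2,p_0]$ and $q\in(ph,\infty)$, this yields a martingale solution on some stochastic basis with the regularity
\[
L^p(C([0,T];B^{1-2/p}_{2,p,0}))\cap L^q((0,T)\times D)\cap L^p((0,T);H^1_0),
\]
with moment bounds depending only on the data. To get a global weak solution we either run a diagonal/consistency argument over $T\in\N$, or observe that the Yamada--Watanabe theorem of \cite{T} can be applied on each $[0,T]$. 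In the latter case one glues the strong solutions obtained for $T\in\N$, using the uniqueness proved for all $T$.

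\textbf{Pathwise uniqueness.} This is the main step and the main obstacle. Let $u,v$ be two solutions on the same basis with the same $u_0$ and the same $W$, both in the stated regularity class. We run an $L^1$-contraction (Kruzhkov-type doubling is unnecessary, since both solutions have $H^1_0$-regularity). Fix a smooth convex approximation $\eta_\delta$ of $|\cdot|$ with $\eta_\delta''$ supported in $[-\delta,\delta]$ and $\eta_\delta''\lesssim 1/\delta$. Apply Itô's formula to $\int_D\eta_\delta(u-v)\,dx$; this is legitimate in the Gelfand triple $H^1_0\subset L^2\subset H^{-1}$ because $u,v\in L^2(H^1_0)$ and the noise is in $L^2$. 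We stop at $\tau_R$, the time at which $\|u\|_{L^\infty}$ fails, since $L^\infty$ is not available. Since $q>ph$, the local Lipschitz constants of the nonlinearities, which grow like $|u|^{h-1}$, are integrable, so $\tau_R$ is defined through $L^q$-type norms of $u,v$ instead.

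The terms are estimated as follows. The leading term splits as
\[
-\int \eta_\delta''(u-v)\,a(u)|\nabla(u-v)|^2-\int\eta_\delta''(u-v)\bigl(a(u)-a(v)\bigr)\nabla v\cdot\nabla(u-v).
\]
The first piece is dissipative. The second is bounded, using the Lipschitz property of $a$ and $|u-v|\le\delta$ on the support of $\eta_\delta''$, by $C\int_{\{|u-v|\le\delta\}}|\nabla v||\nabla(u-v)|$. Combining with the Itô correction $\tfrac12\sum_n\int\eta_\delta''(u-v)|b_n(u)\nabla(u-v)+(b_n(u)-b_n(v))\nabla v+g_n(u)-g_n(v)|^2$, the stochastic parabolicity condition in Assumption~\ref{ass:ql} absorbs the $|\nabla(u-v)|^2$ parts. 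The remaining cross terms carry a factor $\delta^2\cdot\delta^{-1}$, or $\delta\cdot\delta^{-1}$ times the set $\{0<|u-v|\le\delta\}$, multiplied by $|\nabla v|^2$ or $|\nabla u|^2$. By dominated convergence these vanish as $\delta\to0$, because $\nabla(u-v)=0$ a.e.\ on $\{u=v\}$ and $\nabla u,\nabla v\in L^2$. The flux terms $\Phi$ are treated in the same way, since $\eta_\delta''(u-v)|\Phi(u)-\Phi(v)|\le C(|u|^{h-1}+|v|^{h-1}+1)\mathbf 1_{|u-v|\le\delta}$. The term $\phi$ is handled by one-sided Lipschitz/dissipativity: $\eta_\delta'(u-v)(\phi(u)-\phi(v))\le C(1+|u|^{h-1}+|v|^{h-1})|u-v|$.

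Taking expectations kills the martingale, at least after localization. Letting $\delta\to0$ gives
\[
\E\|u-v\|_{L^1}(t\wedge\tau_R)\le C_R\int_0^t\E\|u-v\|_{L^1}(s\wedge\tau_R)\,ds.
\]
Gronwall's lemma then gives $u=v$ up to $\tau_R$, and letting $R\to\infty$ gives $u=v$. The delicate part is making the stochastic parabolicity absorption compatible with the precise form of Assumption~\ref{ass:ql}, and handling the growth-weighted terms. If a pointwise weight is needed, I would instead use $\E[\e^{-\int_0^t K(s)ds}\|u-v\|_{L^1}]$ with $K=C(1+\|u\|_{L^\infty}^{h-1}+\dots)$ replaced by a localized version.

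\textbf{Strong existence.} The Yamada--Watanabe theorem of \cite{T} applies to our non-variational, time-dependent setting and allows the moment/regularity class to be built into the notion of solution. From weak existence plus pathwise uniqueness within that class, it yields a unique strong solution in the sense of Definition~\ref{def: strong sol} on the given basis with the given $W$ and $u_0$. The additional regularity transfers from the weak solution, since the laws coincide. Consistency across $T$ then gives the global solution.
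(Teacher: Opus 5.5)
\textbf{Your uniqueness argument does not close at the $\phi$-term.} You bound $\eta_\delta'(u-v)(\phi(u)-\phi(v))$ by $C(1+|u|^{h-1}+|v|^{h-1})|u-v|$. You then assert that, after stopping at a time $\tau_R$ defined through $L^q$-type norms of $u,v$, one gets $\E\|u-v\|_{L^1}(t\wedge\tau_R)\le C_R\int_0^t\E\|u-v\|_{L^1}(s\wedge\tau_R)\,ds$. This does not follow. An $L^q$-bound only gives $\int_D|u|^{h-1}|u-v|\,dx\le\|u\|_{L^q}^{h-1}\|u-v\|_{L^r}$ with $r=q/(q-h+1)>1$, and $\|u-v\|_{L^r}$ cannot be controlled linearly by $\|u-v\|_{L^1}$. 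Interpolation only yields a sublinear power of the $L^1$-norm, for which Gronwall gives no uniqueness. Your fallback with the weight $\e^{-\int_0^t K}$, $K\sim\|u\|_{L^\infty}^{h-1}+\|v\|_{L^\infty}^{h-1}$, needs $L^\infty$-control that is not available here. So, as written, the $L^1$-contraction step fails.

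\textbf{The missing observation} is that Assumption~\ref{ass:ql}~\ref{ass:2ndorder14} is designed to avoid such weights altogether. The one-sided bound $(\phi(y)-\phi(z))(y-z)\le C(y-z)^2$ holds with a uniform constant. Together with $\eta_\delta'(\xi)/\xi\ge0$ and $|\eta_\delta'|\le1$, this gives $\eta_\delta'(u-v)(\phi(u)-\phi(v))\le C|u-v|$ pointwise; this is Lemma~\ref{lem:dissi}. Likewise, $|\Phi(y)-\Phi(z)|\le C(|y-z|+|y-z|^2)$ gives $\eta_\delta''(u-v)|\Phi(u)-\Phi(v)|\le C\mathbf{1}_{\{|u-v|\le\delta\}}$ without growth factors. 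With these bounds no stopping time is needed:
\begin{itemize}
\item the stochastic integral is a true martingale, since $\eta_\delta'$ is bounded and the noise lies in $L^2(\Omega\times(0,T)\times D;\ell^2)$;
\item all error terms have finite expectation using only $u,v\in L^2(\Omega\times(0,T);H^1_0(D))$;
\item Gronwall closes in $L^1$.
\end{itemize}

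This also removes a second problem. You prove uniqueness only among solutions in the $L^p/L^q$ regularity class. The theorem, however, claims uniqueness among all solutions of Definition~\ref{def: strong sol}, which carry only these $L^2$-moments. This is why Proposition~\ref{prop:pathwise_uniqueness} is stated for arbitrary weak solutions of Definition~\ref{def: weak sol}.

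The rest of your plan is sound and matches the paper: the Yamada--Watanabe step via~\cite{T} with the moment constraints built into the solution notion, and the gluing over $[0,n]$. Your cutoff $\eta_\delta$, combined with dominated convergence and $\nabla(u-v)=0$ a.e.\ on $\{u=v\}$, is a valid substitute for the paper's $\psi_m$ with $\psi_m''(\xi)\le 2/(m|\xi|)$. The paper's choice instead yields explicit $O(1/m)$ error terms.
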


To make the statement of Theorem \ref{th:strong_existence_uniqueness} precise, we first detail the structural assumptions on \eqref{eq:ql}. The coefficients and nonlinearities are assumed to satisfy the following conditions for some $h>1$.  A comparison with the analogous assumption \cite[Asm.\ 6.1 ($h$)]{BV} for probabilistically weak existence is provided in Remark \ref{rem:assumptions} below.

  \begin{assumption}{QL($h$)}\label{ass:ql}\label{ass:weak existence} 
   The following hold:
       \begin{enumerate}[{\rm(i)}]
			\item\label{ass:2ndorder11} The coefficient functions $a_{ij}\colon (0,\infty) \times \Dom \times \R \to \R$ and $b_j \coloneqq (b_{n,j})_{n\geq 1} \colon (0,\infty) \times \Dom \times \R\to \ell^2$
            are measurable, uniformly bounded, Lipschitz continuous in the last component uniformly in $(t,x)$, and $\sup_{t,x,y} |\partial_{x_j} b_{n,j}(t,x,y)| < \infty$ for all $n$ and $j$.
			\item\label{ass:2ndorder12}
			One has $a_{ij} = a_{ji}$ and stochastic parabolicity holds, that is, there exists $\lambda>0$ such that, for all $t\in (0,\infty)$, $x\in D$, and $y\in \R$,
			$$
			\big(a_{ij}(t,x,y)-\frac12 \sum_{n \geq 1} b_{n,i}(t,x,y) b_{n,j}(t,x,y) \big)
			\xi_i \xi_j
			\geq  \lambda |\xi|^2, \quad \xi\in \R^{d}.
			$$
			\item\label{ass:2ndorder13} The nonlinearities $\Phi \colon (0,\infty) \times D \times \R \to \R^d$ and $\phi \colon (0,\infty) \times D \times \R \to \R$ are measurable, locally Lipschitz continuous in the last component  uniformly in $(t,x)$, 
            and satisfy the following growth condition: there exists a constant $C$ such that
			\begin{align}\label{eq:condPhiphi} \tag{G}
				|\Phi(t,x,y)| + |\phi(t,x,y)|&\leq C(1+|y|^{h}), \quad  t\in (0,\infty), x\in D,  y\in \R.
			\end{align}
			\item\label{ass:2ndorder14} The nonlinearity $\phi$ is dissipative in the following sense: there exists a constant $C$ such that for all $t\in (0,\infty)$, $x\in D$, $y,z\in \R$ there holds
			\begin{align}\label{eq:phi_dissip}
				(\phi(t,x,y) - \phi(t,x,z))(y-z) \leq C(y-z)^2.
			\end{align}
            The nonlinearity $\Phi$ %
            is dissipative in the following sense: there exists a constant $C$ such that for all $t\in (0,\infty)$, $x\in D$, $y,z\in \R$ there holds
            \begin{align}
            \label{eq:Phi_dissip}
                |\Phi(t,x,y) - \Phi(t,x,z)| \leq C\bigl( |y-z| + |y-z|^2 \bigr).
            \end{align}
			\item\label{ass:2ndorder15} The stochastic nonlinearity $g \coloneqq (g_n)_{n \geq 1} \colon (0,\infty) \times D \times \R \to \ell^2$ is measurable, Lipschitz continuous in the last component uniformly in $(t,x)$, and satisfies the following growth condition: there exists a constant $C$ such that
			\begin{align}
				\| g(t,x,y) \|_{\ell^2}&\leq C(1+|y|), \quad  t\in (0,\infty), x\in D,  y\in \R.
			\end{align}
		\end{enumerate}
     \end{assumption} 

    \begin{remark}[Comparison of the assumptions]\label{rem:assumptions}
    Our Assumption~\ref{ass:ql} is a slightly stronger version of~\cite[Asm.\ 6.1 ($h$)]{BV}.
    As was proved in ~\cite{BV},  the latter is sufficient for probabilistically weak existence, even for systems. However, the question of uniqueness is more delicate, and we can only treat the scalar case.   
    The differences between Assumption~\ref{ass:ql} and ~\cite[Asm.\ 6.1 ($h$)]{BV} in the scalar case are as follows:
    \begin{itemize}\itemsep-.45em 
        \item We assume $a_{ij}$  and $b_j$ to be  
        Lipschitz continuous in the last component instead of merely continuous.
        \item 
        In \cite{BV},  the dissipativity bound $\phi(t,x,y) y \leq \hat C (|y|^2+1)$ is required. This bound is implied by our dissipativity condition  \eqref{eq:phi_dissip} and Assumption~\ref{ass:ql}~\ref{ass:2ndorder13}.
    \end{itemize} 
The strengthenings above are necessary for the pathwise uniqueness proof in this paper, which relies on delicate approximations of the $L^1$-norm. On the other hand, the uniqueness proof also permits several relaxations of the assumptions used for weak existence in \cite{BV}: the equation may depend on $\Omega$, the local Lipschitz condition on $\phi$ is no longer required, and the growth of $g$ can be polynomial of order $h$ instead of linear. 
\end{remark}

\begin{remark}[Global solutions]\label{rem:loc t}
Since we consider global strong solutions, all coefficients and nonlinearities are defined on $(0,\infty)$ rather than on a bounded interval $(0,T)$. For ease of reading, we have chosen to make all constants in Assumption~\ref{ass:ql} uniform over $t\in(0,\infty)$. However, it is sufficient for the conditions on $g,\phi,$ and $\Phi$  to hold on every finite interval $[0,T]$ with a constant $C_T$. This sufficiency follows directly from Proposition \ref{prop:pathwise_uniqueness} (which is established on $[0,T]$) and the proof of Theorem \ref{th:strong_existence_uniqueness}. It also relies on the subtle point that the parameter $p_0$ from the weak existence proof in \cite{BV} is independent of the time $T$ (which follows by inspection of the proofs in~\cite{BV}) and the constants belonging to 
$g,\phi,$ and $\Phi$. However, $p_0$ does depend on the bounds and ellipticity constant belonging to $a_{ij}$ and $(b_{n,j})_{n \geq 1}$, so for these coefficients, we cannot localize the constants in time. 
\end{remark}

The notion of solution that we work with is as follows.  

\begin{definition}[Strong solutions]\label{def: strong sol} 
    Let  $h>1$ and suppose Assumption~\ref{ass:ql}. We call $u$ a \emph{global solution to \eqref{eq:ql}} if ${u}\colon \Omega\times [0,\infty)\to L^2(D)$ is $(\F_t)$-progressively measurable and satisfies  the following  for all $T \in (0,\infty)$: %
    \begin{enumerate}
    \item \label{it:sol1} One has $u \in L^2(\Omega \times (0,T); H^1_0(D))$ and  $u\in C([0,T];L^2(D))\cap  L^{2h}((0,T)\times D)$ a.s., 
    \item \label{it:sol2} the following identity holds a.s.\ in $C([0,T];H^{-1}(D))$:
    \[
    {u}(\cdot)= {u}_0+ \int_0^\cdot \partial_i (a_{ij}({u}) \partial_j {u}) + \partial_i \Phi_i({u}) +\phi({u}) \d s + \sum_{n\geq 1} \int_0^\cdot b_{n,j}({u}) \partial_j {u} + g_{n}({u})  \d {w}_n(s).
    \] 
    \end{enumerate}
   Likewise, if in the above we fix $T>0$ and $u\colon \Omega \times [0,T] \to L^2(D)$ is $(\F_t)$-progressively measurable and satisfies~\ref{it:sol1} and~\ref{it:sol2}, then we call $u$ a \emph{solution to \eqref{eq:ql} on $[0,T]$}.
\end{definition}

The following application illustrates Theorem \ref{th:strong_existence_uniqueness}. 

\begin{example}
	Consider the following quasilinear Allen--Cahn equation with gradient noise on a bounded, open set $D\subset\R^d$: 
	\begin{align*} 
		\left\{\quad
		\begin{aligned}
			\d u  &= \big[ \partial_i (a_{ij}(u) \partial_j u) +u-u^3\big] \d t
						+ \sum_{n\geq 1} b_{n,j}(u) \partial_j u \d w_n,\ \ &\text{ on }\Dom,\\
			u &= 0, \ \ &\text{ on }\partial\Dom,
			\\
			u(0)&=u_{0},\ \ &\text{ on }\Dom.
		\end{aligned}\right.
	\end{align*}
    Putting $\phi(t,x,u) \ceqq u - u^3$, Assumption~\ref{ass:ql}~\ref{ass:2ndorder13} and~\ref{ass:2ndorder14} are satisfied with $h=3$. Indeed, for the dissipativity, note that 
    $$(u-u^3-(v-v^3))(u-v)=(u-v)^2(1-(u^2+uv+v^2))\leq (u-v)^2.$$ 
    Thus, imposing Assumption~\ref{ass:ql}~\ref{ass:2ndorder11} and~\ref{ass:2ndorder12} on $a_{ij}$ and $(b_{n,j})_{n \geq 1}$, Theorem \ref{th:strong_existence_uniqueness} yields strong existence and uniqueness for the  Allen--Cahn equation above whenever $u_0\in L^p(\Om;B_{2,p,0}^{1-\nicefrac{2}{p}}(D))\cap L^q(\Om\times D)$ for  $p\in (2,\infty)$ small enough and $q\in (3p, \infty)$. 
\end{example}

Let us conclude this introduction by relating our results to the existing literature on similar quasilinear equations. A strong existence and uniqueness result such as Theorem \ref{th:weak_existence} on the torus can  be found in \cite[Th.\ 2.7]{DHV} for the case $b = \phi = 0$.
Their initial datum is assumed to belong to $L^p$ for every $p\in[1,\infty)$.
Using so-called kinetic solutions, they also treat the  
degenerate case (i.e.\ $\lambda=0$ in Assumption~\ref{ass:ql}~\ref{ass:2ndorder12}). We also mention \cite[\S5]{ASV}, which investigates equations on the torus $\mathbb{T}^d$ similar to \eqref{eq:ql} using a different approach. Both their initial datum $u_0$ and noise terms are more regular, but this way the solution $u$ turns out to be regular in time and space as well.

\subsubsection*{Acknowledgement} The authors thank Mark Veraar for encouraging them to work on the subject.

\section{Probabilistically weak solutions}

The proof of Theorem \ref{th:strong_existence_uniqueness} relies on a version of the Yamada--Watanabe theorem. 
Although the precise details of the latter are subtle and technical, it is  summarized  by: `Existence of a weak solution and pathwise uniqueness imply existence of strong solutions'. In this statement, weak and strong solutions are meant in a probabilistic sense. For details on Yamada--Watanabe theory for SPDEs with variational-like  solution notions, we refer to \cite{T}.  
Our notion of strong solutions was already given in Definition \ref{def: strong sol}. To discuss 
 weak solutions, we consider the following equation:

\begin{align}
		\label{eq:ql'}
		\tag{QLw}
		\left\{\quad
		\begin{aligned}
			\d \tilde{u}  &= \big[ \partial_i (a_{ij}(\tilde{u}) \partial_j \tilde{u}) + \partial_i \Phi_i(\tilde{u}) +\phi(\tilde{u}) \big] \d t
			\\  & \qquad \qquad
			+ \sum_{n\geq 1}  \big[ b_{n,j}(\tilde{u}) \partial_j \tilde{u} + g_{n}(\tilde{u}) \big] \d \tilde{w}_n,\ \ &\text{ on }\Dom,\\
			\tilde{u} &= 0, \ \ &\text{ on }\partial\Dom.%
		\end{aligned}\right.
	\end{align}

The notion of a weak solution to \eqref{eq:ql'} is as follows. 

\begin{definition}[Weak solution]\label{def: weak sol}
Let  $h>1$ and suppose Assumption~\ref{ass:ql}. For $T\in(0,\infty)$, we call $(\tilde{u},\tilde{W},(\tilde{\Omega},\tilde{\F},\tilde{\P},(\tilde{\F}_t)))$ a \emph{weak solution} to \eqref{eq:ql'}  on $[0,T]$ if $(\tilde{\Omega},\tilde{\F},\tilde{\P},(\tilde{\F}_t))$ is a filtered probability space, $\tilde{W}$ is an $\ell^2$-cylindrical Brownian motion on $(\tilde{\Omega},\tilde{\F},\tilde{\P},(\tilde{\F}_t))$, $\tilde{u}\colon \tilde{\Omega}\times [0,T]\to L^2(D)$ is $(\tilde\F_t)$-progressively measurable, $\tilde u \in L^2(\tilde\Omega \times (0,T); H^1_0(D))$, 
    $u\in C([0,T];L^2(D))\cap  L^{2h}((0,T)\times D)$ a.s., 
and a.s.,  for all $t\in[0,T]$ in $H^{-1}(D)$:
    \begin{equation}\label{eq: sol id in H^-1}
            \tilde{u}(t)= \tilde{u}(0)+ \int_0^t \partial_i (a_{ij}(\tilde{u}) \partial_j \tilde{u}) + \partial_i \Phi_i(\tilde{u}) +\phi(\tilde{u}) \d s + \sum_{n\geq 1} \int_0^t b_{n,j}(\tilde{u}) \partial_j \tilde{u} + g_{n}(\tilde{u})  \d \tilde{w}_n(s).
    \end{equation}
\end{definition}

The difference between Definitions \ref{def: strong sol} and \ref{def: weak sol} is that in the former, the probability space $(\Om,\F,\P)$, the cylindrical Brownian motion $W$ and the initial datum $u_0$ are fixed, while this is not the case in Definition \ref{def: weak sol}. Note that $u$ is a solution to \eqref{eq:ql} on $[0,T]$ if and only if $(u,W,(\Om,\F,\P,(\F_t)))$ is a weak solution to \eqref{eq:ql'} on $[0,T]$ and $u(0)=u_0$ a.s.

In this section, we discuss existence of probabilistically weak solutions for the Yamada--Watanabe approach. Pathwise uniqueness will be proved in Section~\ref{sec:uniqueness}. Eventually, we can conclude our main result, Theorem~\ref{th:strong_existence_uniqueness}, in the final Section~\ref{sec:conclusion}.

The next theorem was shown in~\cite[Th.\ 6.6]{BV}, taking Remarks~\ref{rem:assumptions} and~\ref{rem:loc t} into account. 

\begin{theorem}[Weak existence]\label{th:weak_existence} 
Let $h > 1$ and let Assumption~\ref{ass:ql} hold.
Then there exists a $p_0 \in (2,\infty)$ such that for every $p\in(2,p_0]$, $q\in(ph,\infty)$: if  $u_0\in L^p(\Om;B_{2,p,0}^{1-\nicefrac{2}{p}}(D))\cap L^q(\Om\times D)$, 
then for every $T\in (0,\infty)$, there exists  a weak solution  $(\tilde{u},\tilde{W},(\tilde{\Omega},\tilde{\F},\tilde{\P},(\tilde{\F}_t)))$ to \eqref{eq:ql'} on $[0,T]$ in the sense of Definition \ref{def: weak sol} with initial law $\tilde{\P}\#(\tilde{u}(0))=\P\#u_0$. In addition,  
        \[
    \tilde{u}\in L^p(\tilde{\Omega};C([0,T];B_{2,p,0}^{1-\nicefrac{2}{p}}(D)))\cap L^p(\tilde{\Omega}\times(0,T);H_0^{1}(D)) \cap L^q(\tilde{\Omega}\times (0,T)\times D).
        \]
\end{theorem}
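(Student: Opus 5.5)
The plan is to reduce Theorem~\ref{th:weak_existence} to \cite[Th.\ 6.6]{BV} by checking that its hypotheses hold, since the statement is essentially that result transplanted into the present setting. The first step is to check that Assumption~\ref{ass:ql} implies \cite[Asm.\ 6.1 ($h$)]{BV} in the scalar case, restricted to $[0,T]$.

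\begin{itemize}
\item Lipschitz continuity of $a_{ij}$ and $b_j$ in the last variable implies the continuity required there.
\item Parts~\ref{ass:2ndorder12}, \ref{ass:2ndorder13} and~\ref{ass:2ndorder15} match the corresponding conditions in \cite{BV} verbatim, after restricting $t$ to $(0,T)$.
\item The one-sided bound $\phi(t,x,y)y\le \hat C(|y|^2+1)$ follows from \eqref{eq:phi_dissip} with $z=0$. Indeed,
\[
\phi(t,x,y)y\le Cy^2+\phi(t,x,0)y\le Cy^2+C(1+|y|)\le \hat C(1+|y|^2),
\]
using \eqref{eq:condPhiphi} at $y=0$ and Young's inequality.
\end{itemize}

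This is exactly the content of Remark~\ref{rem:assumptions}.

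Next, \cite[Th.\ 6.6]{BV} yields, for each fixed $T$, a constant $p_0$, a stochastic basis, a cylindrical Brownian motion $\tilde W$ and a process $\tilde u$ with $\tilde\P\#\tilde u(0)=\P\#u_0$. The process lies in the stated space $L^p(\tilde\Omega;C([0,T];B^{1-2/p}_{2,p,0}))\cap L^p(\tilde\Omega\times(0,T);H^1_0)\cap L^q(\tilde\Omega\times(0,T)\times D)$ and satisfies the equation in a weak (variational) sense. Following Remark~\ref{rem:loc t}, I would inspect the proofs of \cite{BV} to confirm that $p_0$ depends only on the bounds and ellipticity constant $\lambda$ of $a_{ij}$ and $b_{n,j}$. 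It should not depend on $T$ or on the constants of $\Phi,\phi,g$, and the same $p_0$ then works for all $T$.

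It remains to match the solution notion with Definition~\ref{def: weak sol}.

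\begin{itemize}
\item Progressive measurability comes from the construction in \cite{BV}.
\item $\tilde u\in L^2(\tilde\Omega\times(0,T);H^1_0)$ follows from the $L^p$-bound with $p>2$, since $\tilde\Omega\times(0,T)$ has finite measure.
\item Continuity in $L^2(D)$ follows from the embedding $B^{1-2/p}_{2,p,0}(D)\hookrightarrow L^2(D)$.
\item $\tilde u\in L^{2h}((0,T)\times D)$ almost surely follows since $q>ph>2h$.
\item The integral identity \eqref{eq: sol id in H^-1} in $H^{-1}$ is obtained from the form used in \cite{BV}, namely testing against smooth compactly supported functions. This works because every term is in $L^1(0,T;H^{-1})$ almost surely. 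In particular, $\Phi(\tilde u),\phi(\tilde u)\in L^2((0,T)\times D)$ by \eqref{eq:condPhiphi} together with $\tilde u\in L^{2h}$.
\item The stochastic integrand $b_{n,j}(\tilde u)\partial_j\tilde u+g_n(\tilde u)$ lies in $L^2(0,T;L^2(D;\ell^2))$.
\item A density argument then gives the identity almost surely for all $t$.
\end{itemize}

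The main obstacle I expect is the uniformity of $p_0$ in $T$ and in the lower-order constants. This cannot be quoted from the statement of \cite[Th.\ 6.6]{BV}; it has to be traced through the maximal-regularity and energy estimates there. The rest is bookkeeping.
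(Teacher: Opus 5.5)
Your proposal is correct and follows essentially the same route as the paper. The paper likewise just invokes \cite[Th.\ 6.6]{BV}: Remark~\ref{rem:assumptions} supplies your check that Assumption~\ref{ass:ql} implies \cite[Asm.\ 6.1 ($h$)]{BV} (including the one-sided bound on $\phi$), and Remark~\ref{rem:loc t} supplies the independence of $p_0$ from $T$ and from the constants of $\Phi,\phi,g$, by inspection of \cite{BV}. Your additional bookkeeping matching the solution notion of \cite{BV} to Definition~\ref{def: weak sol} is not spelled out in the paper, but it is sound and uses the same embeddings the paper invokes in Section~\ref{sec:conclusion}.
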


\section{Pathwise uniqueness}
\label{sec:uniqueness}

Throughout this section,  we let $(\tilde{\Omega},\tilde{\F},\tilde{\P},(\tilde{\F}_t))$ be an arbitrary filtered probability space and $\tilde{W}$ an arbitrary $\ell^2$-cylindrical Brownian motion on $(\tilde{\Omega},\tilde{\F},\tilde{\P},(\tilde{\F}_t))$.

To show pathwise uniqueness, we refine the $L^1$-contraction approach from~\cite{HZ}. It uses a suitable approximation of the absolute value which we recall in the following lemma.

\begin{lemma}[Approximation of the absolute value]
    \label{lem:abs_value}
    For every $m \geq 1$ there exists a function $\psi_m \colon \R \to \R$ which is twice continuously differentiable, has bounded second derivative and satisfies the following properties  for all $\xi \in \R$:
    \begin{align}
    \psi_m'(0) &= 0, \label{it:av7} \\
        \psi_m(\xi) &\geq 0, \label{it:av1} \\
        \frac{\psi_m'(\xi)}{\xi} &\geq 0 \quad (\xi \neq 0), \label{it:av5} \\
        \psi_m''(\xi) &\geq 0, \label{it:av8} \\
        \psi_m(\xi) &\geq |\xi| - \e^{1-m} \label{it:av6} \\
        |\psi_m(\xi)| &\leq |\xi|, \label{it:av2} \\
        |\psi_m'(\xi)| &\leq 1, \label{it:av3} \\
        |\psi_m''(\xi)| &\leq \frac{2}{m|\xi|}. \label{it:av4}
    \end{align}
    In particular, for every $\xi \in \R$ we have $\psi_m(\xi) \to |\xi|$ as $m \to \infty$.
\end{lemma}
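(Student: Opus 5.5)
The plan is to use the classical Yamada--Watanabe construction. First build a continuous bump $\rho_m \ge 0$ supported in a dyadic-type interval $(\alpha_m,\beta_m)\subset(0,\infty)$ with $\int \rho_m = 1$ and $\rho_m(z) \le \frac{2}{mz}$. Then define $\psi_m$ by integrating $\rho_m$ twice, after evaluating at $|\xi|$. The only real choice is the interval: its upper endpoint must be at most $\e^{1-m}$, and it must leave enough room for $\int\rho_m=1$ under the constraint $\rho_m\le 2/(mz)$.

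\emph{Choice of interval and bump.} Take $\alpha_m \ceqq \e^{1-2m}$ and $\beta_m \ceqq \e^{1-m}$. Then
\[
\int_{\alpha_m}^{\beta_m} \frac{2}{mz}\d z = \frac{2}{m}\log\frac{\beta_m}{\alpha_m} = \frac{2}{m}\cdot m = 2 > 1 .
\]
Hence there is a continuous function $\rho_m\colon[0,\infty)\to[0,\infty)$ with compact support in $(\alpha_m,\beta_m)$, satisfying $0\le\rho_m(z)\le \frac{2}{mz}$ and $\int_0^\infty\rho_m(z)\d z=1$. For instance, truncate $\frac{2}{mz}$ by a continuous cutoff equal to $1$ on a slightly smaller interval, which still has integral at least $1$, and then rescale by a constant factor $\le 1$. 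This slack (the factor $2$ rather than $1$) is exactly why we need strict inequality: with ratio $\e^m$ and bound $1/(mz)$ the integral would be exactly $1$, and no continuous compactly supported bump would fit. This is the one step needing care.

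\emph{Definition and verification.} Set
\[
\psi_m(\xi)\ceqq\int_0^{|\xi|}\int_0^y\rho_m(z)\d z\d y .
\]
Since $\rho_m$ vanishes near $0$, $\psi_m$ is even and equal to $0$ near the origin. For $\xi\neq0$ we get $\psi_m'(\xi)=\operatorname{sgn}(\xi)\int_0^{|\xi|}\rho_m$ and $\psi_m''(\xi)=\rho_m(|\xi|)$. The map $\xi\mapsto\rho_m(|\xi|)$ is continuous and bounded, so $\psi_m\in C^2$ with bounded second derivative. The properties then follow one by one.
\begin{itemize}
\item $\psi_m'(0)=0$ gives \eqref{it:av7}.
\item $\psi_m'(\xi)$ has the sign of $\xi$ and satisfies $|\psi_m'|\le\int\rho_m=1$, giving \eqref{it:av5} and \eqref{it:av3}.
\item $\psi_m''=\rho_m(|\cdot|)\ge0$ gives \eqref{it:av8}.
\item $\psi_m''(\xi)\le 2/(m|\xi|)$ follows from the bound on $\rho_m$, giving \eqref{it:av4}.
\item Integrating $0\le|\psi_m'|\le1$ from $0$ gives $0\le\psi_m(\xi)\le|\xi|$, which is \eqref{it:av1} and \eqref{it:av2}.
\item For \eqref{it:av6}: for $|\xi|\ge\beta_m$ we have $\psi_m'(\xi)=\operatorname{sgn}\xi$. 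Moreover $\psi_m(\xi)\ge0\ge|\xi|-\beta_m$ for $|\xi|\le \beta_m$. Hence $\psi_m(\xi)=\psi_m(\beta_m)+|\xi|-\beta_m\ge|\xi|-\e^{1-m}$ for $|\xi|\ge\beta_m$.
\end{itemize}
Finally, $|\xi|-\e^{1-m}\le\psi_m(\xi)\le|\xi|$ yields $\psi_m(\xi)\to|\xi|$ as $m\to\infty$.
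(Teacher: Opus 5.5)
Your proof is correct. The interval $(\e^{1-2m},\e^{1-m})$ has logarithmic length $m$, so the bound $\rho_m\le 2/(mz)$ leaves room for a continuous bump of mass one, and integrating that bump twice gives all the listed properties, with \eqref{it:av6} coming from the upper endpoint $\e^{1-m}$. The paper does not prove this lemma; it recalls it from the $L^1$-contraction literature, and your construction is the classical Yamada--Watanabe approximation it is recalling.
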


\begin{lemma}
	\label{lem:dissi}
    Let $C$ denote the dissipativity constant for the nonlinearity $\phi$ from Assumption~\ref{ass:weak existence}~\ref{ass:2ndorder14}. Let $\psi_m$ be as in  Lemma~\ref{lem:abs_value}.
	Then, for every $m \geq 1$, $t > 0$, $x \in D$ and $y,z \in \R$ we have
	\begin{align}
		\psi_m'(y-z) (\phi(t,x,y) - \phi(t,x,z)) \leq C (\psi_m(y-z) + \e^{1-m}).
	\end{align}
\end{lemma}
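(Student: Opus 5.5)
Set $\xi \ceqq y-z$ and fix $m\geq 1$, $t>0$, $x\in D$. The idea is to write $\psi_m'(\xi)$ as a nonnegative multiple of $\xi$, so that the dissipativity bound \eqref{eq:phi_dissip} can be applied directly, and then to compare the resulting quadratic quantity with $\psi_m$. We distinguish two cases.

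\emph{Case $\xi = 0$.} Here \eqref{it:av7} gives $\psi_m'(0)=0$, so the left-hand side vanishes. The right-hand side is nonnegative because $\psi_m\geq 0$ by \eqref{it:av1}, so the inequality holds.

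\emph{Case $\xi\neq 0$.} We write $\psi_m'(\xi) = \frac{\psi_m'(\xi)}{\xi}\,\xi$, where the factor $\frac{\psi_m'(\xi)}{\xi}$ is nonnegative by \eqref{it:av5}. Multiplying \eqref{eq:phi_dissip} by this factor preserves the inequality and yields
\[
\psi_m'(\xi)\bigl(\phi(t,x,y)-\phi(t,x,z)\bigr) \leq C\,\frac{\psi_m'(\xi)}{\xi}\,\xi^2 = C\,\psi_m'(\xi)\,\xi .
\]
We may assume $C\geq 0$, enlarging it if necessary. It remains to show that $\psi_m'(\xi)\xi \leq \psi_m(\xi)+\e^{1-m}$. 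By \eqref{it:av5} we have $\psi_m'(\xi)\xi = |\psi_m'(\xi)|\,|\xi|$, and \eqref{it:av3} then gives $\psi_m'(\xi)\xi\leq|\xi|$. Finally, \eqref{it:av6} gives $|\xi|\leq \psi_m(\xi)+\e^{1-m}$. Chaining these bounds proves the claim.

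An alternative route would use convexity: by \eqref{it:av8}, $\psi_m(0)\geq \psi_m(\xi)-\psi_m'(\xi)\xi$, together with $\psi_m(0)\leq 0$ from \eqref{it:av2}. This would give the sharper bound $\psi_m'(\xi)\xi\leq\psi_m(\xi)$ and is not needed here.

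The only point requiring care is the sign bookkeeping when multiplying \eqref{eq:phi_dissip} by $\frac{\psi_m'(\xi)}{\xi}$. This step is legitimate because that factor is nonnegative by \eqref{it:av5}.
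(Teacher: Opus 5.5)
Your main argument is correct and is essentially the paper's proof: \eqref{it:av5} makes $\psi_m'(y-z)$ a nonnegative multiple of $y-z$, and then \eqref{eq:phi_dissip}, the bound $|\psi_m'|\le 1$ from \eqref{it:av3}, and \eqref{it:av6} finish it, with $C\ge 0$ tacitly assumed exactly as in the paper (your case $\xi=0$ already needs it). Drop the closing ``alternative route'', though: convexity gives $\psi_m(0)\ge\psi_m(\xi)-\psi_m'(\xi)\xi$, which with $\psi_m(0)=0$ yields $\psi_m'(\xi)\xi\ge\psi_m(\xi)$, the \emph{opposite} inequality, and the claimed bound $\psi_m'(\xi)\xi\le\psi_m(\xi)$ is false for large $\xi>0$, since $\psi_m'(\xi)\xi-\psi_m(\xi)=\int_0^\xi(\psi_m'(\xi)-\psi_m'(s))\d s>0$ once $\psi_m'$ is nonconstant on $[0,\xi]$; your proof does not rely on it.
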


\begin{proof}
    Let $m \geq 1$, $t > 0$, $x \in D$ and $y,z \in \R$. We can assume $y \neq z$, as otherwise the claim is trivial by~\eqref{it:av1} and~\eqref{it:av7}. Then, using~\eqref{it:av5}, we have
    \begin{align}
        \psi_m'(y-z) (\phi(t,x,y) - \phi(t,x,z)) %
        &= \frac{|\psi_m'(y-z)|}{|y-z|} (\phi(t,x,y) - \phi(t,x,z)) (y-z),
    \end{align}
    and from~\eqref{it:av3} and Assumption~\ref{ass:weak existence}~\ref{ass:2ndorder14} we deduce that
	\begin{align}
		\psi_m'(y-z) (\phi(t,x,y) - \phi(t,x,z)) \leq \frac{1}{|y-z|} C (y-z)^2 = C |y-z|.
	\end{align}
	Now the claim follows from~\eqref{it:av6}.
\end{proof}

\begin{proposition}[Pathwise uniqueness]
    \label{prop:pathwise_uniqueness}
    Fix $h>1$ and let Assumption~\ref{ass:ql} be satisfied. 
    Also, let $T\in(0,\infty)$. 
    Suppose that $(\tilde u, \tilde W, \tilde \Omega, \tilde \F, \tilde \P, (\tilde \F_t))$ and $(\tilde v, \tilde W, \tilde \Omega, \tilde \F, \tilde \P, (\tilde \F_t))$ are weak solutions to~\eqref{eq:ql'} on $[0,T]$ in the sense of Definition~\ref{def: weak sol}, and suppose that  $\tilde u(0) = \tilde v(0)$ a.s. Then it holds that $\tilde u = \tilde v$ a.s.\ in $\Cont([0,T]; L^2(D))$.
\end{proposition}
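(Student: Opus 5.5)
We follow the $L^1$-contraction approach of~\cite{HZ}, combined with a doubling-of-variables/kinetic-type treatment of the quasilinear terms. Set $w \ceqq \tilde u - \tilde v$. We first localize with a stopping time $\tau_R$: the first time at which $\|\tilde u\|_{L^2(0,t;H^1_0)}$, $\|\tilde v\|_{L^2(0,t;H^1_0)}$, $\|\tilde u\|_{L^{2h}((0,t)\times D)}$ or $\|\tilde v\|_{L^{2h}((0,t)\times D)}$ exceeds $R$, capped at $T$. By Definition~\ref{def: weak sol}, $\tau_R \to T$ a.s.

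We then apply Itô's formula to $\int_D \psi_m(w)\d x$, with $\psi_m$ from Lemma~\ref{lem:abs_value}. This is justified in the Gelfand triple $H^1_0 \subset L^2 \subset H^{-1}$, since $\psi_m''$ is bounded and continuous and $\psi_m'(w) \in H^1_0$ because $\psi_m'(0)=0$; an approximation by a smooth convex functional on $L^2$ may be needed. The resulting terms are as follows.

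The leading terms $-\int \psi_m''(w)\,\partial_i w\,(a_{ij}(\tilde u)\partial_j \tilde u - a_{ij}(\tilde v)\partial_j\tilde v)$ and the Itô correction $\frac12\sum_n\int \psi_m''(w)\,|b_{n,j}(\tilde u)\partial_j\tilde u - b_{n,j}(\tilde v)\partial_j \tilde v + g_n(\tilde u)-g_n(\tilde v)|^2$ are split by writing $a(\tilde u)\nabla\tilde u - a(\tilde v)\nabla \tilde v = a(\tilde u)\nabla w + (a(\tilde u)-a(\tilde v))\nabla \tilde v$, and similarly for $b$. The pure $\nabla w$ parts combine, by stochastic parabolicity~\ref{ass:2ndorder12} and Young's inequality with a small parameter, into $-\lambda' \int \psi_m''(w)|\nabla w|^2 \le 0$. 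The cross terms involve $(a(\tilde u)-a(\tilde v))\nabla\tilde v$ with $|a(\tilde u)-a(\tilde v)|\le L|w|$ by Lipschitz continuity. Using~\eqref{it:av4}, $\psi_m''(w)|w|^2 \le 2|w|/m$, so they are bounded by $\frac{C}{m}\int |w|\,|\nabla \tilde v|^2$, which tends to $0$ as $m\to\infty$ after taking expectation up to $\tau_R$. The factor $|w|$ can be controlled by the $L^{2h}$ norms, and if necessary one uses a refined $\psi_m$ supported in $\{e^{-m}\le|w|\le e^{1-m}\}$ so that $|w|\le e^{1-m}$ there. The $g$-differences are handled the same way, using $\psi_m''(w)|w|^2\lesssim |w|/m$.

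The convection term $-\int \psi_m''(w)\,\partial_i w\,(\Phi_i(\tilde u)-\Phi_i(\tilde v))$ is bounded via~\eqref{eq:Phi_dissip} by $\int \psi_m''(w)|\nabla w|(|w|+|w|^2)$. Young's inequality absorbs half of it into the good gradient term and leaves $C\int\psi_m''(w)(|w|^2+|w|^4)$. On the support of $\psi_m''$, where $|w|\le e^{1-m}$, this is $O(1/m)$, up to a factor $|D|$ that is fine since $D$ is bounded. This is where the quadratic form in~\eqref{eq:Phi_dissip} is essential. The reaction term is bounded by Lemma~\ref{lem:dissi}: $\int\psi_m'(w)(\phi(\tilde u)-\phi(\tilde v)) \le C\int \psi_m(w) + C|D|e^{1-m}$. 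The stochastic integral $\sum_n\int_0^{t\wedge\tau_R}\int \psi_m'(w)(\dots)\d\tilde w_n$ is a true martingale thanks to the localization and~\eqref{it:av3}, so it vanishes in expectation.

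Taking expectations gives $\E\int_D\psi_m(w(t\wedge\tau_R)) \le C\int_0^t \E\int_D \psi_m(w(s\wedge\tau_R))\d s + \eps_m(R)$ with $\eps_m(R)\to 0$. Letting $m\to\infty$ via~\eqref{it:av6}, dominated convergence and Gronwall's inequality gives $\E\|w(t\wedge \tau_R)\|_{L^1}=0$. Then $R\to\infty$ and continuity in $L^2$ yield $\tilde u=\tilde v$ in $C([0,T];L^2(D))$. The main obstacle is the cross-term estimate: guaranteeing that $\psi_m''$ concentrates on $\{|w|\lesssim e^{-m}\}$ with $\int\psi_m''\lesssim 1/m$-type bounds, so that the Lipschitz differences in $a$, $b$ and $\Phi$ multiplied by $\nabla\tilde v$ vanish in the limit. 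I would first try to construct $\psi_m$ explicitly, in the Yamada--Watanabe style, so that these properties hold, and then check the integrability of $|\nabla\tilde v|^2$ under the localization.
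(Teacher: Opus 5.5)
Your overall strategy matches the paper's. You apply Itô's formula to $\int_\Dom\psi_m(\tilde u-\tilde v)\d x$, split the $a$- and $b$-terms, absorb the pure $\nabla w$ parts by stochastic parabolicity, use Lemma~\ref{lem:dissi} for $\phi$, drop the martingale, and finish with Gronwall and $m\to\infty$.

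\textbf{The gap: squared remainders are not integrable.} After Young-absorbing the cross terms you are left with $\psi_m''(w)|w|^2|\nabla\tilde v|^2\le\frac{2}{m}|w|\,|\nabla\tilde v|^2$, and for $\Phi$ with $\psi_m''(w)(|w|^2+|w|^4)\le\frac{2}{m}(|w|+|w|^3)$. Neither is known to be integrable.
\begin{itemize}
\item Since $|\nabla\tilde v|^2$ is only in $L^1$, no bound on $\|w\|_{L^{2h}}$ controls $\int|w|\,|\nabla\tilde v|^2$. Your stopping time $\tau_R$ is built from exactly these norms, so it does not help.
\item $|w|^3\in L^1$ would need $2h\ge 3$, which is not assumed.
\end{itemize}

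\textbf{Your fallback is mis-specified.} Take a convex $C^2$ function with $\psi_m'(0)=0$, $\psi_m\ge|\xi|-\e^{1-m}$, and $\psi_m''$ supported in $\{\e^{-m}\le|\xi|\le\e^{1-m}\}$. It must satisfy $\int_{\e^{-m}}^{\e^{1-m}}\psi_m''=1$. But $\int_{\e^{-m}}^{\e^{1-m}}\frac{2}{m\xi}\d\xi=\frac2m<1$ for $m>2$, so the bound $\psi_m''\le\frac{2}{m|\xi|}$ cannot hold with that support. For the same reason, ``$\int\psi_m''\lesssim 1/m$'' is impossible. Your route can be repaired by dropping the $1/m$: choose $\psi_m''\le 2/|\xi|$ supported in that annulus. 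Then $\psi_m''(\xi)\xi^2\le 2\e^{1-m}$, and every remainder is $\lesssim \e^{-m}(1+|\nabla\tilde v|^2)$. As written, however, this step fails, and you rightly flagged it as the main obstacle.

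\textbf{How the paper avoids it.} The paper needs neither a concentration property of $\psi_m''$ nor localization. It uses \eqref{it:av4} \emph{linearly}, cancelling exactly one factor $|w|$, and never squares the first-order cross terms:
\begin{itemize}
\item $\psi_m''(w)|a(\tilde u)-a(\tilde v)|\,|\nabla\tilde v|\,|\nabla w|\lesssim\frac1m|\nabla\tilde v|\,|\nabla w|$;
\item $\psi_m''(w)|\Phi(\tilde u)-\Phi(\tilde v)|\,|\nabla w|\lesssim\frac1m(1+|w|)|\nabla w|$, by \eqref{eq:Phi_dissip}.
\end{itemize}
For the genuinely quadratic $b$-part of the Itô correction, it combines Lipschitz continuity with boundedness: $\|b_j(\tilde u)-b_j(\tilde v)\|_{\ell^2}^2\le 2\sup\|b_j\|_{\ell^2}\,L\,|w|$. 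This gives a bound $\frac1m|\nabla\tilde v|^2$, and the $g$-part gives $\frac1m|w|$. All remainders are therefore $\frac1m$ times $1+|\nabla\tilde u|^2+|\nabla\tilde v|^2+|\tilde u|^2+|\tilde v|^2$. This is integrable because Definition~\ref{def: weak sol} already requires $\tilde u,\tilde v\in L^2(\tilde\Omega\times(0,T);H^1_0(\Dom))$, which is also why no stopping time is needed.
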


\begin{proof}
		Let $t\in [0, T]$ be arbitrary.
		For every $m$ define the functional $\Psi_m \colon L^2(\Dom) \to \R$ by $\Psi_m(f) = \int_\Dom \psi_m(f) \d x$, where $\psi_m$ is as in Lemma~\ref{lem:abs_value}. By the properties of $\psi_m$ stated in Lemma~\ref{lem:abs_value}, the It\^o formula from~\cite[Th.\ 4.2, p.\ 65]{Pardoux} is applicable with the functional $\Psi_m$,  and applying it to the process $\tilde u-\tilde v$ yields the identity
		\begin{align}
			\begin{split}
			\label{eq:ito}
			&\int_\Dom \psi_m(\tilde u(t)-\tilde v(t)) \d x \\
            = -&\int_0^t \int_\Dom \psi_m''(\tilde u-\tilde v) (a_{ij}(\cdot,\tilde u) \partial_j \tilde u - a_{ij}(\cdot,\tilde v) \partial_j \tilde v) \partial_i (\tilde u-\tilde v) \d x \d s \\
			\quad -&\int_0^t \int_\Dom \psi_m''(\tilde u-\tilde v) (\Phi_i(\cdot,\tilde u) - \Phi_i(\cdot,\tilde v)) \partial_i (\tilde u-\tilde v) \d x \d s \\
			+&\int_0^t \int_\Dom \psi_m'(\tilde u-\tilde v)(\phi(\cdot,\tilde u)-\phi(\cdot,\tilde v)) \d x \d s \\
			+&\sum_n \int_0^t \int_\Dom \psi_m'(\tilde u-\tilde v)[b_{n,j}(\cdot,\tilde u) \partial_j \tilde u - b_{n,j}(\cdot,\tilde v) \partial_j \tilde v + g_n(\cdot,\tilde u) - g_n(\cdot,\tilde v)] \d x \d w_n(s) \\
			+&\frac{1}{2} \int_0^t \int_\Dom \psi_m''(\tilde u-\tilde v) \sum_n \Bigl| \sum_j (b_{n,j}(\cdot,\tilde u) \partial_j \tilde u - b_{n,j}(\cdot,\tilde v) \partial_j \tilde v) + g_n(\cdot,\tilde u) - g_n(\cdot,\tilde v) \Bigr|^2 \d x \d s \\
			= -&\I - \II + \III + \IV + \VV.
			\end{split}
		\end{align}
        Note that we suppress certain variable dependencies to increase readability. 
        
        To handle term $\VV$, we expand $$b_{n,j}(\cdot,\tilde u) \partial_j \tilde u - b_{n,j}(\cdot,\tilde v) \partial_j \tilde v = b_{n,j}(\cdot,\tilde u) \partial_j (\tilde u-\tilde v) + (b_{n,j}(\cdot,\tilde u) - b_{n,j}(\cdot,\tilde v)) \partial_j \tilde v.$$ Then, for $\eps>0$, we apply the inequality $\nicefrac{1}{2} |y+z|^2 \leq (\nicefrac{1}{2} + \eps) y^2 + C_\eps z^2$ to obtain
		\begin{align}
			\label{eq:b_absorb}
			\VV &\leq (\nicefrac{1}{2} + \eps) \int_0^t \int_\Dom \psi_m''(\tilde u-\tilde v) \sum_n \Bigl|\sum_j b_{n,j}(\cdot,\tilde u) \partial_j (\tilde u-\tilde v)\Bigr|^2 \d x \d s \\
			&\quad+ C_\eps \int_0^t \int_\Dom \psi_m''(\tilde u-\tilde v) \sum_n \Bigl| \sum_j (b_{n,j}(\cdot,\tilde u) - b_{n,j}(\cdot,\tilde v)) \partial_j \tilde v + g_n(\cdot,\tilde u) - g_n(\cdot,\tilde v) \Bigr|^2 \d x \d s.
		\end{align}
		On the other hand, rewrite with a similar expansion
		\begin{align}
			\label{eq:a_absorb}
			\begin{split}
			\I ={} &\int_0^t \int_\Dom \psi_m''(\tilde u-\tilde v) a_{ij}(\cdot,\tilde u) \partial_j (\tilde u-\tilde v) \partial_i (\tilde u-\tilde v) \d x \d s \\
			+ &\int_0^t \int_\Dom \psi_m''(\tilde u-\tilde v) (a_{ij}(\cdot,\tilde u) - a_{ij}(\cdot,\tilde v)) \partial_j \tilde v \partial_i (\tilde u-\tilde v) \d x \d s.
			\end{split}
		\end{align}
        Now, keeping~\eqref{it:av8} in mind, we use Assumption~\ref{ass:weak existence}~\ref{ass:2ndorder11} and~\ref{ass:2ndorder12} pointwise in $s$ and $x$ to handle the first terms on the right-hand sides of~\eqref{eq:b_absorb} and~\eqref{eq:a_absorb}. More precisely, since $b$ is uniformly bounded, we can take $\eps$ small enough (depending on $b$ and the ellipticity constant $\lambda$) such that 
        $\eps\sum_n  |\sum_j b_{n,j}(\cdot,\tilde u) \partial_j (\tilde u-\tilde v) |^2\leq \lambda|\nabla(\tilde u-\tilde v)|^2$, so that the sum of the first terms on the right-hand sides of~\eqref{eq:b_absorb} and~\eqref{eq:a_absorb} is nonpositive. %

		Next, we treat term $\III$. By  Lemma~\ref{lem:dissi} we readily find
		\begin{align}
			\III \leq C \int_0^t \int_\Dom \psi_m(\tilde u-\tilde v) \d x \d s + CT|D| \e^{1-m}.
		\end{align}
		Plugging all back into~\eqref{eq:ito} and taking expectations, we deduce
		\begin{align}
			&\E \int_\Dom \psi_m(\tilde u(t)-\tilde v(t)) \d x \\
            \lesssim{} &\E \int_0^t \int_\Dom \psi_m''(\tilde u-\tilde v) |(a_{ij}(\cdot,\tilde u) - a_{ij}(\cdot,\tilde v)) \partial_j \tilde v \partial_i (\tilde u-\tilde v)| \d x \d s \\
            +\, &\E \int_0^t \int_\Dom \psi_m''(\tilde u-\tilde v) |(\Phi_i(\cdot,\tilde u) - \Phi_i(\cdot,\tilde v)) \partial_i (\tilde u-\tilde v)| \d x \d s \\
            +\, &\E \int_0^t \int_\Dom \psi_m(\tilde u-\tilde v) \d x \d s + CT|D| \e^{1-m} \\
			+\, &\E \int_0^t \int_\Dom \psi_m''(\tilde u-\tilde v) \sum_n \Bigl| \sum_j (b_{n,j}(\cdot,\tilde u) - b_{n,j}(\cdot,\tilde v)) \partial_j \tilde v + g_n(\cdot,\tilde u) - g_n(\cdot,\tilde v) \Bigr|^2 \d x \d s.
		\end{align}
        Note that the term $\IV$ disappeared due to its martingale property.
		Using the bound~\eqref{it:av4}, Lipschitz continuity of $a$,  Assumption~\ref{ass:ql}~\ref{ass:2ndorder14},  Lipschitz continuity and  boundedness of $b$, Lipschitz continuity of $g$, and Young's inequality, we further get
		\begin{align}
			\E \int_\Dom \psi_m(\tilde u(t)-\tilde v(t)) \d x &\lesssim \frac{1}{m} \E \int_0^t \int_\Dom \bigl( |\partial_j \tilde v|+1 + |\tilde u-\tilde v| \bigr)|\partial_j (\tilde u-\tilde v)| \d x \d s \\
			&\quad+ \E \int_0^t \int_\Dom \psi_m(\tilde u-\tilde v) \d x \d s + CT|D| \e^{1-m} \\
            &\quad+ \frac{1}{m} \E \int_0^t \int_\Dom |\partial_j \tilde v|^2 + |\tilde u-\tilde v| \d x \d s \\
			&\lesssim \frac{1}{m} \E \int_0^T \int_\Dom 1 + |\nabla \tilde u|^2 + |\nabla \tilde v|^2 + |\tilde u|^2 + |\tilde v|^2 \d x \d s \\
			&\quad+ \E \int_0^t \int_\Dom \psi_m(\tilde u-\tilde v) \d x \d s + CT|D| \e^{1-m}.
		\end{align}
		Gronwall's inequality now gives
		\begin{align}
			\E \int_\Dom \psi_m(\tilde u(t)-\tilde v(t)) \d x \lesssim \frac{1}{m} \Bigl(1+\| \tilde u \|_{L^2(\Omega \times (0,T); H^1(\Dom))}^2 + \| \tilde v \|_{L^2(\Omega \times (0,T); H^1(\Dom))}^2 \Bigr) + \e^{1-m}.
		\end{align}
		Recall from Lemma~\ref{lem:abs_value} that $0 \leq \psi_m(x) \to |x|$ as $m\to \infty$.
		Since the norms on the right-hand side are finite, Fatou's lemma allows us to take the limit $m\to \infty$,   leading to 
		\begin{align}
			\E \int_D |\tilde u(t)-\tilde v(t)| \d x \leq 0.
		\end{align}
		Thus, almost surely we have $\tilde u(t,x)=\tilde v(t,x)$ for almost every $x\in D$. In particular, a.s.\ $\tilde u(t) = \tilde v(t)$ in $L^2(D)$. Since $t$ was arbitrary, this gives the claim.
	\end{proof}

\section{Conclusion of the main result}
\label{sec:conclusion}

\begin{proof}[Proof of Theorem \ref{th:strong_existence_uniqueness}]
We have established weak existence in Theorem \ref{th:weak_existence}  and pathwise uniqueness in Proposition \ref{prop:pathwise_uniqueness} on all  intervals $[0,T]$ with $T\in(0,\infty)$. Therefore,  we can invoke the Yamada--Watanabe theorem to obtain strong existence on each $[0,T]$, and afterwards deduce global strong existence. However, the solution notion to which we have to apply the Yamada--Watanabe theorem is non-standard, so we resort to the  general Yamada--Watanabe result of \cite[Th.\ 3.1]{T}. 

Fix $T \in (0,\infty)$. Let us specify how we apply this result using the notation from \cite{T}.  
For the flexible $\mathrm{C}$-weak solution notion of \cite[Def.\ 2.5]{T}, we use  $Z\ceqq H^{-1}(D)$,  
$$
\mathbb{B}\ceqq C([0,T];B_{2,p,0}^{1-\nicefrac{2}{p}}(D))\cap  L^{q}((0,T)\times D)\cap L^p((0,T);H_0^{1}(D))
$$ 
and let the collection of  solution constraints be given by $\mathrm{C}\ceqq\{(1),(4),(6)\}$ as defined in \cite[p.\ 11, p.\ 12]{T}. In condition (6) of \cite[p.\ 12]{T},  we use   $f_p,f_q\col\mathbb{B}\to \R$ defined by $f_p(u)\ceqq \|u\|_{L^p((0,T);H_0^{1}(D))}+\|u\|_{C([0,T];B_{2,p,0}^{1-\nicefrac{2}{p}}(D))}$ and $f_q(u)\ceqq \|u\|_{L^{q}((0,T)\times D)}$ and require that $f_\alpha(u)\in L^\alpha(\Omega)$ for $\alpha\in\{p,q\}$. Each $f_\alpha$ is continuous, hence Borel measurable. In \cite[p.\ 12]{T},  condition (6) is formulated for a single map $f$, but it can be extended to arbitrarily many maps $f_\alpha$, by redefining   
$
\Gamma_{(6)}\ceqq \cap_\alpha \{\tilde{\mu}\in\mathcal{P}(S_1\times S_2):\int_{\mathbb{B}}|f_\alpha|^\alpha \d (\Pi_1\#\tilde{\mu}) \}  
$  
in \cite[(3.26)]{T}. The terms in  our equation satisfy the measurability conditions of  \cite[(2.3), (2.4)]{T} thanks to Assumption~\ref{ass:ql} and \cite[Lem.\ 2.2]{T}.

With the choices above, \cite[Asm.\ 2.1]{T} is satisfied. %
Moreover, if $u_0\in L^p(\Om;B_{2,p,0}^{1-\nicefrac{2}{p}}(D))\cap L^q(\Om\times D) $ is given on a probability space $(\Om,\F,\P)$, then we can define $\mu\ceqq \P\#u_0$ (the law of $u_0$) and the following are equivalent: 
\begin{enumerate}
    \item $(\tilde u, \tilde{W},(\tilde \Omega, \tilde \F, \tilde \P, (\tilde \F_t)))$ is a $\mathrm{C}$-weak solution in the sense of \cite[Def.\ 2.5]{T} with  \\$\tilde{\P}\# (\tilde{u}(0))=\mu$, 
    \item $(\tilde u, \tilde{W},(\tilde \Omega, \tilde \F, \tilde \P, (\tilde \F_t)))$ is a weak solution on $[0,T]$ in the sense of Definition \ref{def: weak sol} with $\tilde{\P}\# (\tilde{u}(0))=\mu$, and additionally, $$\tilde u\in L^p(\tilde\Om;C([0,T];B_{2,p,0}^{1-\nicefrac{2}{p}}(D)))\cap L^q(\tilde\Om\times (0,T)\times D)\cap L^p(\tilde\Om\times(0,T);H_0^{1}(D)).$$ %
\end{enumerate} 
For the equivalence, we take into account \cite[Rem.\ 2.10]{T} and note that $p\in(2,\infty)$ and  $q>ph>2h$, so $\mathbb{B}\hookrightarrow C([0,T];L^2(D))\cap  L^{2h}((0,T)\times D)$. 
In particular, if (i) holds, then $\tilde u\in\mathbb{B}$ a.s.\ \cite[Def.\ 2.5]{T}, and thus   
    $\tilde u\in C([0,T];L^2(D))\cap  L^{2h}((0,T)\times D)$ a.s., as required in Definition \ref{def: weak sol}. Moreover, the integrability with respect to $\tilde \Om$ in (ii) is encoded by our choice of $f_p$ and $f_q$ in condition (6) that was included in the set of constraints $\mathrm{C}$. 

Since (ii)$\Rightarrow$(i), existence of a $\mathrm{C}$-weak solution is provided by Theorem \ref{th:weak_existence}. Furthermore, pathwise uniqueness of $\mathrm{C}$-weak solutions is provided by Proposition \ref{prop:pathwise_uniqueness}, since   any $\mathrm{C}$-weak solution is in particular a weak solution in the sense of Definition \ref{def: weak sol} by (i)$\Rightarrow$(ii). 
An application of \cite[Th.\ 3.1 (a)$\Rightarrow$(c)]{T} now yields all the claims of Theorem \ref{th:strong_existence_uniqueness}, including the additional regularity of $u$, on the finite time interval $[0,T]$.  %
The remaining task is to extend this to a unique global solution.

Observe that if $u$ is a global solution to \eqref{eq:ql}, then $u|_{[0,T]}$ is a solution to \eqref{eq:ql} on $[0,T]$ for each $T\in(0,\infty)$. Thus, uniqueness of global solutions follows from the uniqueness of solutions on $[0,T]$. To obtain existence of a global strong solution, we apply the first part of the proof on $[0,n]$ for $n \in \N$, yielding countably many strong solutions $u^n$ to~\eqref{eq:ql} on $[0,n]$. Pathwise uniqueness yields $u^m|_{[0,n]} = u^n$ a.s. for $m \geq n$. Hence, patching together the solutions $(u^n)_n$ gives rise to a global strong solution $u$ to~\eqref{eq:ql} with the claimed regularity. 
\end{proof}

\end{document}